\documentclass[12pt,reqno]{amsart}

\numberwithin{equation}{section}

\newtheorem{teo}{Theorem }[section]

\newtheorem{lem}[teo]{Lemma}

\newtheorem{rem}{Remark}
\def \R{ \mathbf{R}}

\begin{document}


\title[2D BENNEY-LIN EQUATION
]
      { Global solutions for a  2D BENNEY-LIN type  EQUATION posed ON RECTANGLES and on a half-strip}
\author[ N.~A. Larkin]{Nikolai A. Larkin$^{\dag}$}

\address
{
Departamento de Matem\'atica, Universidade Estadual
de Maring\'a, Av. Colombo 5790: Ag\^encia UEM, 87020-900, Maring\'a, PR, Brazil
}

\email{$^{\dag}$nlarkine@uem.br}

\keywords {Benney-Lin equation , Dispersive equations, Exponential
Decay}
\thanks{}
\thanks{MSC 2010:35Q53;35B35}

\begin{abstract}
We formulate on rectangles and on the right horizontal half-strip  initial-boundary value problems for
a two-dimensional Benney-Lin type equation.
 Existence and uniqueness of a regular solution as well as the exponential decay rate for the  norm
of a regular solution have been established.
\end{abstract}

\maketitle

\section{\bf Introduction}

We are concerned with  initial-boundary value problems (IBVP) posed
on rectangles and the right horizontal half-strip for the 2D Benney-Lin-Kawahara equation (BLK)
$$ u_ t+\Delta^2u +\gamma\Delta u+\Delta u_x +uu_x + \eta\partial^5_x u=0    \eqno(1.1)$$
 which is a two-dimensional analog of the well-known Benney-Lin
equation, \cite{benney,lin,biagioni,phys,fam3,kawa,ax},
$$
	 u_t+\eta D^5_x u+\beta D^4_x u+\alpha D^3_x u+ \gamma D^2_x u +uu_x=0. \eqno(1.2)
$$ 
In turn, (1.1) includes as special cases the KdV, Kuramoto-Sivashinsky, Kawahara equations., \cite{kuramoto,sivash,cousin,familark,topper}.
 The theory of the Cauchy problem for (1.2) and other dispersive
equations has been extensively studied and is
considerably advanced today
\cite{biagioni, cui,kato,temam1}. In
recent years, results on IBVPs  for dispersive equations both in
bounded and unbounded domains have appeared
\cite{bubnov,chile,doronin3,familark,ax,larkin,larkin1,larkin2,pastor2,saut4}. It was
discovered in \cite{larkin,marcio} that the KdV and Kawahara
equations have an implicit internal dissipation. This allowed to
prove  exponential decay of small solutions in bounded domains
without adding any artificial damping term. Later, this effect was
proved for a wide class of dispersive equations of any odd order
with one space variable \cite{familark}.
 Publications on dispersive multy-dimensional equations of a higher
order (such as the KZK equation) appeared quite recently and were
concerned with the existence of weak solutions, \cite{fam3}, and
physical motivation \cite{phys}.\par Our work here was motivated by \cite{topper} where stability of solutions due to presense of higher order terms has been studied. Coefficient $\gamma$ in (1.1) may be positive or negative depending on wavenumbers of long waves considered. The condition $\gamma>0$ is considered to be instability condition, hence to guarantee stability of the system, we must take into account the fourth-order terms. On the other hand, the condition $\gamma<0$ guarantees stability  while the fourth-order terms give an additional damping effect. Differently from \cite {topper}, we added the fifth-order Kawahara term putting $\eta=-1$.
\par We study (1.1) on rectangles
$$
D=\left\{(x,y) \in \mathbb{R}^{2}: \quad x \in (0,L), \quad y \in (0,B), \quad \partial D
\right\},
$$
where $\partial D$ is a boundary of $D$.
For $\gamma>0$, we establish  the existence and uniqueness of global regular solutions for $t\in(0,T),$  where $T$ is an arbitrary positive number, without smallness conditions for the initial data. 
 More precisely, we formulate in Section 2
IBVP (2.1)-(2.3). In order to demonstrate existence of global
regular solutions, we exploit the Faedo-Galerkin method. Estimates,
independent of the parameter of approximations $N$, permit us to
establish the existence of regular solutions for the original
problem (2.1)-(2.3). We prove these estimates in Section 3. \\

In Section 4, we pass to the limit as $N \to \infty$ and obtain a
global regular solution of (2.1)-(2.3). In Section 5, we prove
uniqueness of a regular solution. Finally, in Section 6, we
establish the exponential decay rate of small
solutions both for $\gamma>0$ and for $\gamma\leq0.$ Section 7 contains Conclusions.

\section{Notations and Auxiliary Facts}

Let $B,L$ be  positive numbers and $x \in (0,L); y\in (0,B); D=D(x,y)\in \mathbf{R^2}.$ We use the standard notations of Sobolev spaces $W^{k,p}$, $L^p$ and $H^k$ for functions and the following notations for the norms \cite{Adams, Brezis}:
$$\R^+=\{t\in\R^1;\;t> 0\},\;\;\| f \|^2 =(f,f)= \int_0^L\int_0^B | f |^2dxdy,$$
$$\|f\|^p_{L^p}=\int_0^L\int_0^B|f|^pdxdy,\;\;\| f \|_{W^{k,p}} = \sum_{0 \leq \alpha \leq k} \|D^\alpha_x f \|_{L^p},\;$$$$ D^\alpha_x=\frac {d^{\alpha}}{dx^{\alpha}},\;D^\alpha_y=\frac {d^{\alpha}}{dy^{\alpha}},\;\;D_x=D^1_x,\;D^0_xu=u.$$
We will use also the standad notations: $$D_xf=f_x,\;D_x^2 f=f_{xx},\;\frac{\partial}{\partial t}f=f_t,\;D_yf=f_y.$$
When $p = 2$, $W^{k,p} = H^k$ is a Hilbert space with the scalar product 
$$((u,v))_{H^k}=\sum_{|j|\leq k}(D^ju,D^jv),\;\|u\|_{\infty}=
\|u\|_{L^{\infty}(D)}=ess \sup_{(D)}|u(x)|.$$
We use the notation $H_0^k(D)$ to represent the closure of $C_0^\infty(D)$, the set of all $C^\infty$ functions with compact support in $(D)$, with respect to the norm of $H^k(D)$.

\begin{lem}[Steklov's Inequality \cite{steklov}] Let $v \in H^1_0(0,L).$ Then
	\begin{equation*}\label{Estek}
		\frac {\pi^2}{L^2}\|v\|^2(t) \leq \|v_x\|^2(t).
	\end{equation*}
\end{lem}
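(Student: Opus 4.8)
The plan is to treat $t$ as a mere parameter: for each fixed $t$ the function $v=v(\cdot,t)$ belongs to $H^1_0(0,L)$, so the assertion reduces to the purely spatial Poincar\'e-type inequality $\frac{\pi^2}{L^2}\int_0^L v^2\,dx \le \int_0^L v_x^2\,dx$ for every $v\in H^1_0(0,L)$. Since both sides are continuous with respect to the $H^1$ norm and $C_0^\infty(0,L)$ is dense in $H^1_0(0,L)$, it suffices to prove the inequality for smooth $v$ vanishing at the endpoints and then pass to the limit.

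First I would expand such a $v$ in the Fourier sine series $v(x)=\sum_{n=1}^\infty b_n\sin\frac{n\pi x}{L}$ associated with the orthogonal basis $\{\sin(n\pi x/L)\}_{n\ge1}$ of $L^2(0,L)$; these are precisely the eigenfunctions of $-D_x^2$ under homogeneous Dirichlet conditions, with eigenvalues $(n\pi/L)^2$. Termwise differentiation then gives $v_x(x)=\sum_{n=1}^\infty b_n\frac{n\pi}{L}\cos\frac{n\pi x}{L}$.

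Next, applying Parseval's identity to each series yields
$$\|v\|^2=\frac{L}{2}\sum_{n=1}^\infty b_n^2,\qquad \|v_x\|^2=\frac{L}{2}\sum_{n=1}^\infty\Big(\frac{n\pi}{L}\Big)^2 b_n^2.$$
Since $(n\pi/L)^2\ge(\pi/L)^2$ for every $n\ge 1$, a term-by-term comparison of the two series immediately gives $\|v_x\|^2\ge\frac{\pi^2}{L^2}\|v\|^2$, with equality exactly when only the first mode survives, i.e. $v(x)=b_1\sin(\pi x/L)$, which also shows the constant is sharp.

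The computation itself is elementary; the one point requiring care is justifying the termwise differentiation and the convergence of the expansion in $H^1(0,L)$, which is exactly what the density reduction to smooth $v$ secures. As an alternative avoiding Fourier analysis, one could minimize the Rayleigh quotient $\|v_x\|^2/\|v\|^2$ over $H^1_0(0,L)$: its Euler--Lagrange equation is the Dirichlet eigenvalue problem $-v''=\lambda v$, $v(0)=v(L)=0$, whose least eigenvalue is $\lambda_1=\pi^2/L^2$, delivering the sharp constant directly.
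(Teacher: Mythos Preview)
Your proof is correct and entirely standard: the Fourier sine expansion yields the sharp Poincar\'e constant $\pi^2/L^2$, and your density reduction and Parseval computation are sound. Note, however, that the paper does not actually prove this lemma; it is stated with a citation to \cite{steklov} and no argument is given. So there is no ``paper's own proof'' to compare against---you have supplied a proof where the paper simply invokes a classical reference.
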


\begin{lem}
	[Differential form of the Gronwall Inequality]\label{gronwall} Let $I = [t_0,t_1]$. Suppose that functions $A,B:I\to \R$ are integrable and a function $A(t)$ may be of any sign. Let $u:I\to \R$ be a differentiable function satisfying
	\begin{equation*}
		u_t (t) \leq A(t) u(t) + B(t),\text{ for }t \in I\text{ and } \,\, u(t_0) = u_0,
	\end{equation*}
	then
	
	\begin{equation*}u(t) \leq u_0 e^{ \int_{t_0}^t A(\tau)\, d\tau } + \int_{t_0}^t e^{\int_{t_0}^s A(r) \, dr} B(s)\, ds.\end{equation*}.
\end{lem}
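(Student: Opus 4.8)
The plan is to use the classical integrating-factor method, which handles the sign-indefinite coefficient $A(t)$ cleanly. Since $A$ is integrable on $I$, the function
\[
\mu(t) = e^{-\int_{t_0}^t A(\tau)\,d\tau}
\]
is well defined, strictly positive, and differentiable with $\mu'(t) = -A(t)\mu(t)$. The idea is that multiplying the hypothesis by $\mu(t)>0$ preserves the direction of the inequality precisely because the exponential is positive regardless of the sign of $A$, which is exactly the feature the statement emphasizes.

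First I would rewrite the differential inequality as $u_t(t) - A(t)u(t) \leq B(t)$ and multiply through by $\mu(t)$ to obtain $\mu(t)u_t(t) - A(t)\mu(t)u(t) \leq \mu(t)B(t)$. By the product rule together with the identity $\mu'(t) = -A(t)\mu(t)$, the left-hand side collapses to a total derivative, so that
\[
\frac{d}{dt}\bigl(\mu(t)u(t)\bigr) \leq \mu(t)B(t), \qquad t \in I.
\]
Next I would integrate this inequality from $t_0$ to $t$, using that $\mu(t_0)=1$ and $u(t_0)=u_0$, which yields
\[
\mu(t)u(t) \leq u_0 + \int_{t_0}^t \mu(s)B(s)\,ds.
\]

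Finally I would divide by $\mu(t)>0$, equivalently multiply by $1/\mu(t) = e^{\int_{t_0}^t A(\tau)\,d\tau}$, and distribute across the two terms to arrive at the claimed bound, recognizing that $\mu(s)/\mu(t)$ reduces to an exponential of the integral of $A$ so that the kernel inside the integral takes the asserted form. I do not expect a genuine obstacle here: the only points requiring care are verifying that integrability of $A$ guarantees $\mu$ is absolutely continuous so that the product rule and the fundamental theorem of calculus apply, and checking that multiplication by the strictly positive factor $\mu$ never reverses the inequality. The proof is therefore a short, self-contained computation once the integrating factor is introduced.
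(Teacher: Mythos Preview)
Your proposal is correct and follows exactly the approach the paper uses: the paper's own proof consists of the single instruction to multiply the inequality by the integrating factor and integrate from $t_0$ to $t$. You have simply spelled out the details of that computation, including why positivity of the exponential preserves the inequality and why the product rule collapses the left-hand side to a total derivative.
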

\begin{proof}
	Multiply the last inequality by the integrating factor $e^{\int_{t_0}^{s} A(r)\, dr}$ and integrate from $t_0$ to $t$.
\end{proof}

The next Lemmas will be used in  estimates:\\
\begin{lem}
 (See \cite{niren}, p. 125.)
Suppose $u$ and $D^mu$, $m\in\mathbb{N},$ belong to $L^2(0,L)$. Then for the derivatives $D^iu$, $0\leq i<m$, the following inequality holds: 

$$	\|D^iu\|\leq A_1\|D^mu\|^{\frac{i}{m}}\|u\|^{1-\frac{i}{m}}+A_2\|u\|,$$

where $A_1$, $A_2$ are constants depending only on $L$, $m$, $i$.
\end{lem}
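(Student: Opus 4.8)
The plan is to reduce the general statement to a single key estimate for consecutive derivatives and then bootstrap. The core is the base case $m=2$, $i=1$: namely that
$$\|u'\| \le A_1\|u''\|^{1/2}\|u\|^{1/2} + A_2\|u\|$$
for every $u$ with $u, u'' \in L^2(0,L)$. Once this is established for an arbitrary function, applying it to $v = D^{j-1}u$ yields the three-term bound
$$\|D^ju\| \le A_1\|D^{j+1}u\|^{1/2}\|D^{j-1}u\|^{1/2} + A_2\|D^{j-1}u\|$$
for every interior index $1 \le j \le m-1$, and the full inequality with the interpolation exponent $i/m$ follows by iterating these over the chain of indices.

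To prove the base case I would partition $(0,L)$ into consecutive subintervals $I_k = [a_k, a_k+h]$ of common length $h$, with $h \le L$ to be fixed later. On each $I_k$ the mean value theorem gives a point $\xi_k$ with $|u'(\xi_k)| = |u(a_k+h)-u(a_k)|/h$, and for any $x \in I_k$ the fundamental theorem of calculus gives $u'(x) = u'(\xi_k) + \int_{\xi_k}^x u''(s)\,ds$. Squaring, applying the Cauchy--Schwarz inequality to the integral term, and integrating over $I_k$ produces a bound of the form $\int_{I_k}|u'|^2 \le C\bigl(h^{-2}\int_{I_k}|u|^2 + h^2\int_{I_k}|u''|^2\bigr)$; summing over $k$ gives $\|u'\|^2 \le C\bigl(h^{-2}\|u\|^2 + h^2\|u''\|^2\bigr)$. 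I would then optimize in $h$: the choice $h \sim (\|u\|/\|u''\|)^{1/2}$ balances the two terms and produces the interpolation contribution $A_1\|u''\|^{1/2}\|u\|^{1/2}$, while in the regime where this optimal $h$ exceeds $L$ one is forced to take $h = L$, and the resulting bound is then controlled purely by $A_2\|u\|$. Combining the two regimes gives exactly the stated two-term estimate.

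The remaining task is to pass from the consecutive-index inequality to the general exponent $i/m$, and I would do this by induction on $m$. Assuming the result for lower orders, one writes $\|D^iu\|$ via the three-term bound in terms of $\|D^{i-1}u\|$ and $\|D^{i+1}u\|$, substitutes the inductive interpolation bounds for these neighbouring norms, and then collapses the resulting product of powers using Young's inequality. The main obstacle, and the step requiring the most care, is precisely this collapsing: one must verify that the powers of $\|D^mu\|$ and $\|u\|$ recombine into the exact convex combination with exponents $i/m$ and $1-i/m$, and that every application of Young's inequality keeps the lower-order remainder expressed in terms of $\|u\|$ alone, never in terms of intermediate derivatives, so that the additive term does not deteriorate under iteration. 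Tracking the dependence of the constants on $L$, $m$, $i$ through this induction is routine but must be carried out to confirm that they depend on nothing else.
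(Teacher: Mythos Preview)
The paper does not supply a proof of this lemma at all: it simply cites Nirenberg's classical paper and moves on. There is therefore nothing in the paper to compare your argument against.

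Your sketch is the standard route to the one-dimensional Gagliardo--Nirenberg inequality and is correct. A couple of small remarks if you intend to flesh it out: first, the partition of $(0,L)$ into subintervals of exact length $h$ is only possible when $h$ divides $L$, so one typically allows the last subinterval to have length in $[h,2h)$, which changes only the constants; second, in the regime where the optimal $h$ exceeds $L$ and you set $h=L$, the term $L^2\|u''\|^2$ does not vanish on its own, but the regime hypothesis $\|u''\|\le L^{-2}\|u\|$ lets you absorb it into $CL^{-2}\|u\|^2$, which is what justifies the claim that the bound becomes purely $A_2\|u\|$. The inductive collapsing of exponents that you flag as the delicate step is indeed the only place requiring real bookkeeping, and your description of how to handle it (Young's inequality, keeping remainders in terms of $\|u\|$ alone) is the right one.
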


\begin{lem}[See: \cite{lady2} ]
	
i) For all $u \in H^1_0(D)$
\begin{equation*} \|u\|^2_{L^4(D)} \leq 2 \|u\|_{L^2(D)}\|\nabla u\|_{L^2(D)}.
\end{equation*}
\qquad \qquad \qquad \qquad ii) For all $u \in H^1(D)$
\begin{equation*} {\|u\|}_{L^4(D)}^2 \leq C_D {\|u\|}_{L^2(D)}{\|u\|}_{H^1(D)}, \label{p2}
\end{equation*}
where the constant $C_D$ depends on a way of continuation of $u \in
H^1(D)$ as $ \tilde{u}(\mathbb{R}^2)$ such that $\tilde{u}(D)=u(D).$	
	
\end{lem}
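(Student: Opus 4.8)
The plan is to prove (i) first for smooth compactly supported functions by a one-dimensional integration argument carried out in each of the two variables, then extend it by density, and finally to deduce (ii) from (i) by means of a bounded extension operator. Throughout I work on $\mathbb{R}^2$ after a reduction, which makes the directional arguments clean.

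For part (i), since by definition $H^1_0(D)$ is the closure of $C_0^\infty(D)$ in the $H^1$-norm, it suffices to establish the estimate for $u \in C_0^\infty(D)$ and then pass to the limit; extending such a $u$ by zero produces a function in $C_0^\infty(\mathbb{R}^2)$. Fixing $y$ and writing $u^2(x,y) = 2\int_{-\infty}^x u\,u_\xi\,d\xi$, I would bound $\sup_x u^2(x,y) \le 2\int_{\mathbb{R}}|u||u_x|\,dx$, and symmetrically $\sup_y u^2(x,y) \le 2\int_{\mathbb{R}}|u||u_y|\,dy$. Writing $u^4 = u^2\cdot u^2$ and estimating one factor by its supremum in $x$ and the other by its supremum in $y$, the double integral of $u^4$ factorizes into a product of two single integrals in $y$ and in $x$ respectively. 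Applying the Cauchy--Schwarz inequality to each factor and then the elementary bound $\|u_x\|\,\|u_y\| \le \tfrac12\|\nabla u\|^2$, I would arrive at $\|u\|_{L^4}^4 \le 2\|u\|^2\|\nabla u\|^2$, that is $\|u\|_{L^4}^2 \le \sqrt{2}\,\|u\|\,\|\nabla u\| \le 2\|u\|\,\|\nabla u\|$, which is the stated inequality (with the constant in fact improvable to $\sqrt{2}$).

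For part (ii), the function $u$ need not vanish on $\partial D$, so I would first extend it off the rectangle. Using a bounded extension operator adapted to $D$ (for a rectangle, successive reflections across the four sides give an explicit one), one obtains $\tilde u \in H^1(\mathbb{R}^2)$ with $\tilde u|_D = u$, together with $\|\tilde u\|_{L^2(\mathbb{R}^2)} \le C\|u\|_{L^2(D)}$ and $\|\tilde u\|_{H^1(\mathbb{R}^2)} \le C\|u\|_{H^1(D)}$. Since $C_0^\infty(\mathbb{R}^2)$ is dense in $H^1(\mathbb{R}^2)$, the directional argument of part (i) applies verbatim to $\tilde u$ on the whole plane and yields $\|\tilde u\|_{L^4(\mathbb{R}^2)}^2 \le \sqrt{2}\,\|\tilde u\|_{L^2(\mathbb{R}^2)}\,\|\nabla \tilde u\|_{L^2(\mathbb{R}^2)}$. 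Restricting to $D$ on the left, and inserting the two extension bounds separately on the right, gives $\|u\|_{L^4(D)}^2 \le C_D\|u\|_{L^2(D)}\|u\|_{H^1(D)}$, with $C_D$ inheriting precisely the dependence on the chosen continuation, as the statement records.

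The genuinely routine steps are the one-dimensional identity for $u^2$ and the Cauchy--Schwarz estimates; I expect the only point demanding real care to be the extension in part (ii). One must produce an operator that is bounded \emph{simultaneously} on $L^2$ and on $H^1$, so that the $\|u\|_{L^2(D)}$ and $\|u\|_{H^1(D)}$ factors on the right-hand side are controlled \emph{separately} rather than merged into a single $H^1$ norm; this is exactly the place where the domain-dependent constant $C_D$ is unavoidable.
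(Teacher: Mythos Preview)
Your argument is correct and is precisely the classical proof of Ladyzhenskaya's inequality: the one-dimensional identity $u^2=2\int u u_\xi\,d\xi$, the factorization of $\iint u^4$ via the two directional suprema, Cauchy--Schwarz on each factor, and the AM--GM step $\|u_x\|\,\|u_y\|\le\tfrac12\|\nabla u\|^2$ yield $\|u\|_{L^4}^2\le\sqrt{2}\,\|u\|\,\|\nabla u\|$, and the extension argument for (ii) is exactly right, including your observation that the operator must be bounded on $L^2$ and $H^1$ separately to preserve the product structure on the right-hand side.

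As for comparison with the paper: there is nothing to compare. The paper does not prove this lemma; it is simply quoted from Ladyzhenskaya--Solonnikov--Uraltseva with the header ``See: \cite{lady2}'' and no argument is given. Your write-up is the standard proof one finds in that reference, so it is entirely in line with what the citation points to.
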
 

\begin{lem} [see \cite{ax}, Lemma 5.] 
	Let $f(t)$ be a continuous  positive function such that $f'(t)$ is a measurable, integrable function and
	
	$$f'(t) + (\alpha - k f^n(t)) f(t) \leq 0,\;t>0,\;n\in \mathbf{N}; \eqno(3)$$
	$$ \alpha - k f^n(0)> 0,\;k>0. \eqno(4).$$
	Then
	
	$$f(t) < f(0)$$ 	 for all $t > 0$.
\end{lem}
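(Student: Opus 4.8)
The plan is to run a continuity/bootstrap argument built on the observation that the coefficient $\alpha - kf^{n}(t)$ is positive at $t=0$, and that as long as it stays positive the hypothesis forces $f$ to decrease, which in turn keeps the coefficient positive. First I would rewrite the assumed inequality in the form $f'(t)\le -\bigl(\alpha - kf^{n}(t)\bigr)f(t)$. Since $f'$ is assumed measurable and integrable, I would read $f$ as (locally) absolutely continuous, so that $f(t)=f(0)+\int_{0}^{t}f'(s)\,ds$ and pointwise sign information on $f'$ can be converted into monotonicity of $f$.

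Next I would show that $f$ decreases immediately. Because $f$ is continuous and $\alpha - kf^{n}(0)>0$, there is a $\delta>0$ with $\alpha - kf^{n}(t)>0$ on $[0,\delta]$; combined with $f>0$ this gives $f'(t)<0$ there, hence $f(t)<f(0)$ for $t\in(0,\delta]$. The heart of the proof is then a first-crossing argument: assuming the conclusion fails, set $t^{*}=\inf\{t>0:\ f(t)\ge f(0)\}$. The previous step gives $t^{*}\ge\delta>0$, and continuity of $f$ gives $f(t^{*})\ge f(0)$. On the open interval $(0,t^{*})$ one has $f(t)<f(0)$, so by monotonicity of $s\mapsto s^{n}$ on $(0,\infty)$ we get $f^{n}(t)<f^{n}(0)$ and therefore $\alpha - kf^{n}(t)>\alpha - kf^{n}(0)>0$. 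Since $f>0$, this yields $f'(t)<0$ throughout $(0,t^{*})$, and integrating gives $f(t^{*})=f(0)+\int_{0}^{t^{*}}f'(s)\,ds<f(0)$, which contradicts $f(t^{*})\ge f(0)$. Hence the crossing set is empty and $f(t)<f(0)$ for all $t>0$, with the inequality strict because $f'$ is strictly negative on the relevant interval.

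The step I expect to be the main obstacle is the regularity bookkeeping needed to pass from ``$f'<0$ pointwise'' to ``$f$ strictly decreasing''. Mere continuity of $f$ is not enough for this (witness the Cantor function), so I would lean on the stated integrability of $f'$ to justify the fundamental-theorem identity $f(t)=f(0)+\int_{0}^{t}f'(s)\,ds$; in the applications of this lemma $f$ is a norm that is genuinely absolutely continuous, so this is harmless. Everything else is conceptually the self-reinforcing mechanism — decrease of $f$ keeps $\alpha - kf^{n}$ positive, which keeps $f$ decreasing — and is elementary once the monotonicity of the power $x\mapsto x^{n}$ on the positive axis is invoked.
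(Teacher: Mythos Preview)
The paper does not actually supply a proof of this lemma; it is stated with a citation to \cite{ax} and then used later in Section~6, so there is no in-paper argument to compare against. Your continuity/first-crossing argument is the standard way such bootstrap lemmas are proved and is correct: positivity of $\alpha - kf^{n}(0)$ and continuity of $f$ force $f$ to start decreasing; the first time $t^{*}$ at which $f$ would return to $f(0)$ cannot exist, because on $(0,t^{*})$ one has $f^{n}(t)<f^{n}(0)$, hence $\alpha - kf^{n}(t)>0$, hence $f'<0$, hence $f(t^{*})<f(0)$, a contradiction. The regularity caveat you raise (needing the fundamental-theorem identity $f(t)=f(0)+\int_{0}^{t}f'$, i.e.\ local absolute continuity rather than mere continuity) is exactly right and is the only point where the hypotheses as written are slightly loose; in the paper's applications $f(t)=(e^{kx},u^{2})(t)$ is a.c.\ in $t$, so this is not an issue in practice.
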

\section{Formulation of the problem }

Let $T,L,B$ be arbitrary real positive numbers;
\begin{eqnarray} && D= \left\{(x,y) \in \mathbb{R}^{2}: \quad x\in(0,L), \quad y \in (0,B) \right\}; \nonumber \\
&& Q_t= D \times (0,t), \quad t \in (0,T). \nonumber
\end{eqnarray}
Consider in $Q_T$ the following IBVP:

\begin{align}
	& Lu \equiv u_t + \Delta^2 u+ \Delta u+ \Delta u_x +uu_x-\partial^5_x u=0 \quad \textrm{in} \quad Q_t;  \\
	& u|_{\partial D}=u_{yy}(x,0)=u_{yy}(x,B)= u_x(0,y)\notag\\&=u_x(L,y)=u_{xx}(L,y)=0; \\
	& u(x,y,0)=u_0(x,y), \quad (x,y) \in D. 
\end{align}

Here 
 ${\Delta}= \partial_x^2 + \partial_y^2$. We adopt the usual
 notations $H^k$ for $L^2$-based Sobolev spaces;\,
 $\| \cdot \|$ and $(\cdot , \cdot )$ denote the norm and the scalar product in $L^2(D)$, ${|\nabla u |}^2=u_x^2+u_y^2$.

\section{Existence Theorem}

\begin{teo}\label{T1} Let $T,B,L$ be arbitrary real positive numbers. Given $u_0(x,y)$  such that
\begin{eqnarray*}
&& u_0 \in H^4(D), \quad   \partial_x^5 u_0\;\in L^2(D), \nonumber \\
&& u_0(0,y)=u_{0x}(0,y)=u_0(L,y)=u_{0x}(L,y)=u_{0xx}(L,y)\\&&=u_0(x,0)=u_0(x,B)=u_{0yy}(x,0)=u_{0yy}(x,B)=0, \nonumber\\&&
J_w=\int_D \big[|\Delta^2 u_0|^2+|\Delta u_0|^2+u^2_0 u^2_{0x}+|D^5_{0x}|^2\big]dxdy<\infty,\end{eqnarray*}

then there exists a unique regular solution of (3.1)-(3.3):
\begin{eqnarray*}
&& u \in L^{\infty} (0,T;H^2(D)) \cap L^2 (0,T;H^4(D)),  \\
&& \partial_x^5 u \;\in L^2(0,T;L^2(D)),
 u_t \in L^{\infty} (0,T;L^2(D)) \cap L^2(0,T;H^2(D)). \\
\end{eqnarray*}
\end{teo}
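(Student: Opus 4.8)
The plan is to realize the Faedo--Galerkin scheme announced in the introduction: build finite-dimensional approximations, derive a priori bounds independent of the approximation parameter $N$, and pass to the limit. I would fix a basis $\{w_j\}_{j\geq1}$ of smooth functions satisfying every boundary condition in (3.2) (for example eigenfunctions of a suitable elliptic operator compatible with these conditions) and seek $u^N(t)=\sum_{j=1}^N g_j^N(t)\,w_j$ solving the Galerkin system obtained by projecting (3.1). Local solvability of the system for $g_j^N$ is immediate from ODE theory; global existence on $[0,T]$ and all regularity will follow from the estimates below.

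The first and structurally decisive estimate comes from testing (3.1) with $u^N$. The boundary conditions (3.2) are tailored so that the biharmonic term integrates cleanly to $\|\Delta u^N\|^2$ (all boundary contributions in Green's identity vanish because $u^N=0$ on $\partial D$, $\Delta u^N=0$ on three sides, and $u^N_x=0$ at $x=0$); the mixed dispersive term $\int_D\Delta u^N_x\,u^N\,dx\,dy$ vanishes after integration by parts; and the nonlinearity $\int_D u^N u^N_x\,u^N=\tfrac13\int_D\partial_x\big((u^N)^3\big)=0$. The essential point is the \emph{internal dissipation} produced by the Kawahara term: integrating $-\int_D u^N\partial_x^5u^N$ by parts five times and using $u^N=u^N_x=0$ at $x=0,L$ together with $u^N_{xx}(L,y)=0$ leaves exactly $\tfrac12\int_0^B\big(u^N_{xx}(0,y)\big)^2\,dy\geq0$. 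Absorbing the single indefinite term $-\int_D\Delta u^N\,u^N=\|\nabla u^N\|^2$ by interpolation, $\|\nabla u^N\|^2\leq\varepsilon\|\Delta u^N\|^2+C_\varepsilon\|u^N\|^2$, I arrive at
\begin{equation*}
\tfrac{d}{dt}\|u^N\|^2+\|\Delta u^N\|^2+\int_0^B\big(u^N_{xx}(0,y)\big)^2\,dy\leq C\,\|u^N\|^2,
\end{equation*}
and Lemma 2.2 yields the uniform bounds in $L^\infty(0,T;L^2(D))\cap L^2(0,T;H^2(D))$ together with an integrable dissipative trace.

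The main obstacle is the higher estimate that reaches $L^\infty(0,T;H^2)\cap L^2(0,T;H^4)$ and controls $\partial_x^5u$ and $u_t$. I would test (3.1) with $\Delta^2u^N$ and, in a companion step, differentiate (3.1) in $t$ and test with $u^N_t$. The linear terms again give a positive principal part ($\|\Delta^2u^N\|^2$, respectively $\|\Delta u^N_t\|^2$) plus a boundary dissipation and controllable lower-order remainders. The delicate term is the nonlinearity, e.g.\ $\int_D u^Nu^N_x\,\Delta^2u^N\,dx\,dy$: I would redistribute derivatives and estimate the resulting products by the Ladyzhenskaya inequalities of Lemma 2.4 combined with the Nirenberg--Gagliardo interpolation of Lemma 2.3, bounding factors such as $\|u^N\|_{L^4}\|u^N_x\|_{L^4}$ by $\varepsilon\|\Delta^2u^N\|^2$ plus norms already controlled by the first estimate. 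After absorbing the $\varepsilon$-terms and applying Lemma 2.2, this produces the $H^2$ bound uniform in $t$ and the $H^4$ bound in $L^2(0,T)$; equation (3.1) then expresses $\partial_x^5u^N$ and $u^N_t$ through already-bounded quantities, delivering the remaining regularity classes.

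With these $N$-independent bounds, weak-$*$ and weak compactness extract a subsequence $u^N\rightharpoonup u$ in the stated spaces, while the Aubin--Lions lemma gives strong $L^2(Q_T)$ convergence sufficient to pass to the limit in $u^Nu^N_x$; hence $u$ solves (3.1)--(3.3). For uniqueness I would take two solutions $u,v$, set $w=u-v$, test its equation with $w$, and use $uu_x-vv_x=\tfrac12\partial_x\big((u+v)w\big)$, whose contribution reduces to $\tfrac14\int_D(u+v)_x\,w^2\,dx\,dy$; this is controlled by the $H^2$-regularity of $u,v$ through Sobolev embedding, and together with the dissipative structure and Lemma 2.2 forces $w\equiv0$.
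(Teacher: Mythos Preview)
Your first $L^2$-estimate is correct and matches the paper. The genuine gap is in your higher-order step: testing with $\Delta^2 u^N$ does not work, because the Kawahara term then produces boundary contributions that (3.2) does not control. Concretely, $\Delta^2 u^N$ contains $\partial_x^4 u^N$, and
\[
-2\big(\partial_x^5 u^N,\partial_x^4 u^N\big)
=-\int_0^B\Big[(\partial_x^4 u^N)^2\Big]_{x=0}^{x=L}\,dy,
\]
which involves fourth $x$-derivatives at $x=0$ and $x=L$; these are not fixed by the data $u=u_x=0$ at $x=0$ and $u=u_x=u_{xx}=0$ at $x=L$, so the term has no sign and cannot be absorbed. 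The mixed piece $-2(\partial_x^5u^N,2\partial_x^2\partial_y^2u^N)$ has the same defect. A related difficulty is your choice of Galerkin basis: a full two-variable basis ``compatible with (3.2)'' would have to encode the asymmetric fifth-order $x$-conditions, and no standard self-adjoint elliptic eigenproblem yields that; this is not a detail one can wave away.

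The paper avoids both problems by a different architecture. It semi-discretises only in $y$, writing $u^N=\sum_j\omega_j(y)g_j(x,t)$ with $\omega_j$ the Dirichlet eigenfunctions of $-D_y^2$ on $(0,B)$, so the coefficients $g_j$ solve a coupled system of Kawahara/KdV-type equations in $(x,t)$ whose local solvability is taken from the literature. For the higher estimates it uses only $y$-derivative multipliers, namely $-2\partial_y^2u^N$ and $2\partial_y^4u^N$. Because $\partial_y^k u^N$ inherits the same $x$-boundary conditions as $u^N$ (differentiate (3.2) tangentially), the Kawahara term against these multipliers reproduces exactly the sign-definite trace $\int_0^B(\partial_y^k u_{xx}^N(0,y,t))^2\,dy$, and no uncontrolled $x$-boundary terms appear. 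This yields $u^N_y,u^N_{yy}\in L^\infty_tL^2_x$ with the corresponding $y$-dominated fourth-order norms in $L^2_t$, and then the $u_t$-estimate (differentiate in $t$, test with $u_t$) as you outlined. The $x$-regularity $\partial_x^5u\in L^2(Q_T)$ is \emph{not} obtained by a multiplier at all: after passing to the limit one isolates
\[
\partial_x^5u-\partial_x^4u-\partial_x^3u
=u_t+\partial_y^4u+2\partial_x^2\partial_y^2u+\partial_y^2u_x+\Delta u+uu_x\in L^2(Q_T),
\]
and then Lemma~2.3 gives $\|\partial_x^4u\|+\|\partial_x^3u\|\le 2\delta\|\partial_x^5u\|+C(\delta)\|u\|$, so the left side controls $\|\partial_x^5u\|$ for small $\delta$. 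In short, replace your $\Delta^2u^N$-multiplier by the $y$-derivative multipliers and recover the $x$-regularity from the equation; the rest of your plan (Aubin--Lions compactness, the uniqueness argument) is fine and essentially what the paper does.
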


\begin{proof}

{\bf Approximate Solutions.}

To prove the existence part of this theorem, we
use the Faedo-Galerkin Method as follows:\\ for all $N$ natural, we
define an approximate solution of (3.1)-(3.3) in the
form
\begin{equation}
u^N(x,y,t)=\sum^N_{j=1}\omega_j(y)g_j(x,t), \label{2.1}
\end{equation}
where $\omega_j(y)$ are orthonormal in $L^2(0,L)$ eigenfunctions of
the following  problem:
\begin{eqnarray}
&&- D^2_y\omega=\lambda_j \omega_j(y),\;y\in(0,B);\;\omega_j(0)=\omega_j(B)=0
\end{eqnarray}
and $g_j(x,t)$ are solutions to the following initial- boundary value
problem for the system of N generalized KdV equations:
\begin{eqnarray}
&&\frac{\partial}{\partial t}g_j(x,t)+\sum_{l,k=1}^N
a_{lkj}g_l(x,t)g_{kx}(x,t)+(\Delta u^N,\omega_j)\nonumber \\
&&+(\Delta^2u^N,\omega_j) -\partial^5_x g_j(x,t)+(\Delta u^N_x,\omega_j)=0, \nonumber  \\
&& g_j(0,t)=g_{jx}(0,t)=0, \;g_j(x,0)=u_{0j}(x)\label{eN},
\end{eqnarray}
where
\begin{eqnarray}
&&a_{klj}=\int_0^L \omega_k(y)\omega_l(y)\omega_j(y)\,dy,
\;j,k,l=1,...,N;\nonumber\\
&&u_{0j}(x)=\int_0^L u_0(x,y)\omega_j(y) dy.\nonumber
\end{eqnarray}
 Solvability of (4.3) (at least local in t) follows from
\cite{kuvsh}. Hence, our goal is to prove necessary a
priori estimates, uniform in  $N$, which will permit us to pass to
the limit in (4.1) as $N\to \infty$ and to establish the
existence result. We assume first that a function $u_0$ is
sufficiently smooth to ensure  calculations.  Exact conditions for
$u_0$ will follow from a priori estimates for $u^N$ independent of
$N$ and usual compactness arguments.

{\bf ESTIMATE I.} Multiplying the $j$-equation of (4.3) by
$g_j(x,t)$, summing over $j=1,..,N$ and integrating the result with
respect to $x$ over $R^+$, we obtain
\begin{eqnarray}
&&\frac{1}{2}\frac{d}{dt}\|u^N\|^2(t)+(|u^N|^2,
u_x^N)(t)+(u^N,\partial^3_x u^N)(t)+(\Delta u^N,u^N)(t)\nonumber\\
&&+(\Delta^2 u^N,u^N)(t)-(u^N,\partial^5_x u^N)(t)+(u^N,\partial^2_y u_x^N)(t)=0.\nonumber
\end{eqnarray}
In our calculations we will drop the index $N$ where this is not
ambiguous. Integrating by parts the last equality, we get

\begin{equation}\frac{d}{dt}\|u\|^2(t)+\|\Delta u\|^2(t)+\int^L_0 u_{xx}^2(0,y,t)\,dy\leq\|u\|^2(t).\end{equation}
Applying Lemma 2.2 to the  inequality
$$\frac{d}{dt}\|u\|^2(t)\leq\|u\|^2(t),$$
we get
\begin{equation}
	\|u^N\|^2(t)\leq e^T \|u_0\|^2. \label{E1}
\end{equation}
Returning to (4.4), we find that for $N$ sufficiently large and $\forall t\in(0,T)$
\begin{equation}\|u^N\|^2(t)+\int_0^t[\|\Delta u^N\|^2(\tau)+\int^L_0 |u^N_{xx}|^2(0,y,\tau)\,dy]d\tau\leq e^T\|u_0\|^2.\end{equation}

 \begin{lem} [\cite{larkin1} Lemma 3.1.]
	Let $f\in H^4(D)\cap H^1_0(D)$ and $  f|_{\partial D}$ satisfies (4.2). Then
	\begin{align}
		&\|\nabla f\|^2\geq a\|f\|^2,\;\;a^2\|f\|^2\leq \|\Delta f\|^2,\;\;a\|\nabla f\|^2\leq \|\Delta f\|^2,\\
		&a^2\|\Delta f\|^2\leq \|\Delta^2 f\|^2,\;\;\text{where} \; a=\frac{\pi^2}{L^2} +\frac{\pi^2}{B^2}=\frac{\pi^2(B^2+L^2)}{L^2B^2}.
	\end{align}	
\end{lem}

\begin{lem}Let $f\in H^2(D)\cap H^1_0(D).$ Then
	\begin{equation}
		\sup_D u^2(x,y)\leq C_s \|\Delta u\|^2,\;\text{where}\; C_s=1+\frac{1}{a}+\frac{1}{a^2}.
	\end{equation}
	\begin{proof} Dropping variable $t$ and making use of Lemma 4.2, we can write
		\begin{align*}
			\sup_D u^2(x,y) \leq \sup_{x\in(0,L)}\sup_{y\in(0,B)}|\int_0^y \frac{\partial}{\partial s}u^2(s,x)ds|\\
			\leq \sup_{x\in(0,L)}\int_0^B\{u_y^2(x,y)+u^2(x,y)\}dy\\
			\leq \int_0^L\int_0^B\{|2u_{xy}(x,y)u_y(x,y)+2u_x(x,y)u(x,y)|\}dxdy\\
			\leq \|u_{xy}\|^2+\|\nabla u\|^2+\|u\|^2\leq (1+\frac{1}{a}+\frac{1}{a^2})\|\Delta u\|^2.
		\end{align*}
	\end{proof}
\end{lem}
		The proof of Lemma 4.3
		 is complete.	\\

{\bf ESTIMATE III.} Taking into account the structure of
$u^N(x,y,t)$, consider the scalar product
\begin{equation}
-2(\partial^2_y u^N,[u^N_t+\Delta^2u+\Delta u^N+u^Nu_x^N+\Delta u^N_x
-\partial^5_x u^N])(t)=0. \nonumber
\end{equation}
Acting as by proving Estimate II and dropping the index $N$, we come
to the following equality:
\begin{eqnarray}
&&\frac{d}{dt}\|u^2_y\|(t)+\int^L_0 u_{xxy}^2(0,y,t)\,dy
-2(\Delta^2 u,D^2_y u)(t)\nonumber\\&&=2(u_{yy}, uu_x)(t). \label{e31}
\end{eqnarray}

We estimate
$$I_1=-2(\Delta^2 u,D^2_y u)(t)=2([|D^3_y u|^2+2u^2_{xyy}+u^2_{xxy}])(t),$$

$$ I_2=2(u_{yy},uu_x)(t)=(u_{yy},(u^2)_x)(t)\leq +2|(u_{xy},uu_y)|(t)$$
$$\leq +\epsilon\|u_{xy}\|^2(t)+\frac{sup_D u^2(x,y)}{\epsilon}\|u_y\|^2(t)$$
$$\leq  \epsilon\|u_{xy}\|^2(t)+\Big[2kC^{1/2}_s\|\Delta u\|(t)+\frac{C_s}{\epsilon}\Big]\|u_y\|^2(t)$$
$$\leq \epsilon\|u_{xy}\|^2(t)+\frac{C}{\epsilon}[1+\|\Delta u\|^2(t)]\|u_y\|^2(t).$$

Substituting $I_1-I_2$ into (\ref{e31}), taking $\epsilon
>0$ sufficiently small and using (4.5)-(4.6), we come to the
inequality
\begin{eqnarray}
&& \dfrac{d}{dt}{\|u_y\|}^2(t)+\displaystyle\int_{0}^{L}u^2_{yxx}(0,y,t)\,dy +2(|D^3_y u|^2+2u^2_{xyy}+u^2_{xxy}])(t)\nonumber\\&&\leq  \epsilon\|u_{xy}\|^2(t)+\frac{C}{\epsilon}[1+\|\Delta u\|^2(t)]\|u_y\|^2(t).
 \label{e32} \end{eqnarray} Taking $\epsilon$ sufficiently small and making use of the Gronwall lemma
and Estimates I, II, we find
\begin{eqnarray*}
\|u_y\|^2(t)& \leq & \|u_{0y}\|^2e^{C(\|u_0\|,T)\displaystyle\int_{0}^{t}[1+\|\Delta u \|^2(\tau)]d\tau.}  \\
& \leq & \| u_{0y}\|^2e^{C( \|u_0\|,T)}
\leq C(T,\|u_{0}\|_{H^1_0}).
\end{eqnarray*}
Integrating (\ref{e32}) over $(0,t)$ gives
\begin{eqnarray}
& \|u^N_y\|^2(t)
+ \displaystyle\int_{0}^{t}2([\|D^3_y u^N\|^2+2\|u^N_{xyy}\|^2+\|u^N_{xxy}\|^2])(\tau)\nonumber\\&+[\|u^N_{xy}\|^2(\tau)+\|u^N_{yy}\|^2(\tau)]\, d\tau \nonumber \\
&+\displaystyle\int_{0}^{t}\int^L_0 |u^N_{xxy}(0,y,\tau)|^2\,dy
d\tau  \leq C(T,\|u_0\|)\|u_{0y}\|^2). \label{E3}
\end{eqnarray}

{\bf ESTIMATE IV.}  Dropping the index $N$, transform the scalar
product
$$2(\partial^4_y u^N,[u^N_t+\Delta^2 u^N+\Delta u^N+u^N u^N_x+\Delta u^N_x
-\partial^5_x u^N])(t)=0
$$
into the following equality:
\begin{eqnarray}
\dfrac{d}{dt}\|u_{yy}\|^2
(t)
&& +\displaystyle\int^L_0|\partial^2_y u_{xx}(0,y,t)|^2\,dy
++2(D^4_yu,\Delta^2u)(t)\nonumber\\
&& =-2(\Delta u,D^4_y u)(t)-2(D^4_y u,uu_x)(t). \label{e4}
\end{eqnarray}
We estimate
$$I=2(D^4_yu,\Delta^2u)(t)=2(D^4_yu,[D^4_y+2D^2_yD^2_x+D^4_x]u)(t)=I_1+I_2+I_3,$$
where
\begin{eqnarray*}
&&I_1=2(D^4_yu,D^4_yu)(t)=2\|D^4_yu\|^2)(t),\\
&&I_2=(D^4_yu,2D^2_yD^2_xu)(t)=4\|D^3_yu_x\|^2)(t),\\
&&I_3=2(D^4_yu,D^4_xu)(t)=2\|D^2_yD^2_xu\|^2)(t).
\end{eqnarray*}
Hence
\begin{eqnarray*}
	&&I=2[\|D^4_y\|^2+2\|D^3u_x\|^2+\|D^2_yD^2_xu\|^2](t).
\end{eqnarray*}

For any positive $\epsilon$, integrating by parts, we obtain
\begin{eqnarray*}
&&I_4=2(D^4_yu,uu_x)=-2(D^3_yu_x,uu_y)\leq\epsilon\|D^3_yu_x\|^2)\\&&+\frac{1}{\epsilon}(u^2,u^2_y)
\leq \epsilon\|D^3_yu_x\|^2)+\frac{1}{\epsilon}\sup_D u^2(x,y)\|u_y\|^2(t)\\
&&\leq \epsilon\|D^3_yu_x\|^2)+\frac{1}{\epsilon}C_s\|\Delta u\|^2(t)\|u_y\|^2(t).
\end{eqnarray*}

Taking  $\epsilon >0$ sufficiently small and substituting $I_1-I_4$
into (\ref{e4}), we obtain
\begin{eqnarray}
\dfrac{d}{dt}\|u_{yy}\|^2(t)+2([\|D^4_y\|^2+2\|D^3u_x\|^2+\|D^2_yD^2_xu\|^2])(t)\nonumber\\
+\displaystyle\int_{0}^{L}{|D_y^2u_{xx}(0,y,t)|}^2dy  \leq C(k)\left[ \|\Delta u \|^2(t)+1\right]\|u_{y}\|^2(t).
\end{eqnarray}\\
The previous estimates and the Gronwall lemma yield
\begin{eqnarray}
&& \|u^N_{yy}\|^2)(t)
+\displaystyle\int_{0}^{t}\{2([\|D^4_y u^N\|^2+2\|D^3_yu^N_x\|^2+\|D^2_yD^2_xu^N\|^2])(\tau)
 \,d\tau \nonumber \\
&&+\displaystyle\int_{0}^{t}\displaystyle\int_{0}^{L}{|D_y^2u^N_{xx}(0,y,\tau)|}^2dyd\tau\leq
C(\|u_{0y}\|^2+\|u_{0yy}\|^2), \label{E4}
\end{eqnarray}
where the constant $C$ does not depend on $N,L.$

{\bf ESTIMATE V.} To estimate $u^N_t$, we differentiate (\ref{eN})
with respect to $t$, multiply the $j$-equation of the resulting
system by $g_{jt}$, sum up over $j=1,...,N$ and integrate over
$D$. Calculations, similar to those exploited in Estimate II,
imply

\begin{eqnarray}
&&\frac{d}{dt}\|u_t\|^2(t)+\int_0^L u_{t xx}^2(0,y,t)\,dy +2\|\Delta u_t\|^2(t)\nonumber\\&&
+2(u_t,\Delta u_t)(t)=2([u u_x]_t,u_t)(t)=-(u_{xt},[u^2]_t)(t)-2(u_{xt},uu_t)(t)\nonumber\\&&\leq \epsilon\|u^2_{xt}\|(t)+[2ksup_D|u(x,t)|+\frac{1}{\epsilon}]\|u_t\|^2(t).
\label{5.1}
\end{eqnarray}

Taking  $\epsilon>0$ sufficiently
small and making use of Estimates I-IV and the Gronwall lemma, we
find

\begin{eqnarray*}
\|u_t\|^2(t)&  \leq &C(T,\|u_0\|)J_w.\nonumber
\end{eqnarray*}
Returning to (\ref{5.1}), we deduce
\begin{eqnarray}
&&\|u^N_t\|^2(t)+\int_0^t\int_0^L
|u^N_{xxs}(0,y,s)|^2\,dyds\nonumber\\
&&+\int_0^t\|\Delta u^N_s\|^2])(s)\,ds
\leq C(T)J_w, \label{Ev}
\end{eqnarray}
where 
\begin{equation}J_w=\int_D[|\Delta^2 u_0|^2+|\Delta u_0|^2+u^2_0 u^2_{0x}+|D^5_{0x}|^2]dxdy<\infty.\end{equation}

{\bf ESTIMATE VI.}

From (4.5),(4.6),(4.12),(4.15),(4.17), it follows that
\begin{eqnarray}
u^N,\;u^N_t\in L^{\infty}(0,T;L^2(D))\cap L^2(0,T;H^2_0(D));\\
D^2_y u^N,\;\nabla D^2_y u^N,\;\nabla D^3_y u^N,\;D^2_yD^2_x u^n\in L^2(0,T;(D))
\end{eqnarray}
and these inclusions do not depend on $N.$ Making use of (4.19)-(4.20), we find that
\begin{equation}
u^N\in L^{\infty}(0,T;H^2_0(D))
\end{equation}
uniformly on $N$.\\
Estimates (4.19)-(4.21) make it possible to pass to the limit as $N\to \infty$ in (4.1) and to prove thee existence of a weak solution of (3.1)-(3.3)   in the following sense:
\begin{eqnarray}
\int_0^T\{([u_t+D^4_y u+2D^2_xD^2_y u+\Delta u-D^2_y u],\phi)(t)\nonumber\\
+(D^2_x u,D^3_x \phi)(t)+(D^2_x u,D^2_x \phi)(t)-(D^2_x u,\phi_x)(t)\}dt.
\end{eqnarray}
Here, $u(x,y,t)$ satisfies the same inclusions (4.19)-(4.21) as $u^N$ and $\phi(x,y,t)$ is an arbitrary function from the class: $\phi\in L^{\infty}(0,T;L^2(D))\cap L^2(0,T;H^3_0(D)).$
Taking into account (4.19)-(4.21), for any $t_0\in (0,T)$ nonsingular, we can write

\begin{eqnarray}
([u_t+D^4_y u+2D^2_xD^2_y u+\Delta u-D^2_y u],\phi)(t_0)\nonumber\\
	+(D^2_x u,D^3_x \phi)(t_0)+(D^2_x u,D^2_x \phi)(t_0)-(D^2_x u,\phi_x)(t_0)=0.
\end{eqnarray} \\
In turn, for any $t\in (0,T)$ nonsingular, (4.23) may be rewritten as a distribution in $D$:
\begin{eqnarray}
	D^5_x u -D^4_x u-D^3_x u\nonumber\\
	= u_t+D^4_yu+2D^2_xD^2_yu+D^2_yu_x+\Delta u+uu_x\equiv f(x,y).
\end{eqnarray} 
Since $u$ is a limit of $u^N$ and satisfies (4.19)-(4.21), then $f\in L^2(0,T;L^2(D)).$
By Lemma 2.3,
\begin{eqnarray}
\|D^4_xu\|\leq C\|D^5_xu\|^{4/5}\|u\|^{1/5}\leq \delta\|D^5_xu\|+C_4(\delta)\|u\|,\nonumber\\
\|D^3_xu\|\leq C\|D^5_xu\|^{3/5}\|u\|^{2/5}\leq \delta\|D^5_xu\|+C_3(\delta)\|u\|,
\end{eqnarray}
where $\delta$ is an arbitrary positive number. Substituting (4.25) into (4.24), we get
$$\|D^5_xu\|(1-2\delta)\leq f+C(\delta)\|u\|.$$
Taking $\delta=\frac{1}{4},$ by (4.24),
\begin{equation}
\|D^5_xu\|, \|D^4_xu\|, \|D^3_xu\|\in L^2(0,T;L^2(D)).
\end{equation}
Returning to (4.24), we find that a weak soltuion of(3.1)-(3.3) is such that
\begin{equation}
u\in L^{\infty}(0,T;H^2_0(D)), \;\Delta^2u, D^5_xu\in L^2(0,T;L^2(D)).
\end{equation}
Hence, (4.22) can be rewritten as
\begin{equation}
	\int_0^t([u_{\tau}+\Delta^2u+\Delta u_x +\Delta u+uu_x],\phi)(\tau)d\tau =0,\;t\in(0,T),
\end{equation}
where $\phi(x,y,t)$ is an arbitrary function from $L^2(Q_t).$

\section{Uniqueness}
 Let $u_1$ and $u_2$ be distinct solutions of (3.1)-(3.3)
 and $z=u_1-u_2$. Then $z(x,y,t)$ satisfies
the following initial-boundary value problem:

\begin{eqnarray}
&& Lz= z_t +  \dfrac{1}{2}(u_1^2-u_2^2)_x + \Delta z_x -D_x^5 z\nonumber\\&& +\Delta z+\Delta^2z=0 \ \textrm{in} \ Q_t; \label{u1} \\
&& z(0,y,t)= z_x(0,y,t)=z_{x}(L,y,t)=z_{xx}(L,y,t)\nonumber\\&&=z(x,0,t)=z(x,B,t)=0,;\; \;  y \in (0,L), \; x>0, \; t>0; \label{u2}\\
&& z(x,y,0)=0, \quad (x,y) \in D. \label{u3}
\end{eqnarray}

From the scalar product
\begin{equation}
2(Lz, z)(t)=0, \label{eu}
\end{equation}
 acting in the same manner as by the proof of Estimate II, we obtain
\begin{eqnarray}
&&\frac{d}{dt}\|z\|^2(t)+2\|\Delta z\|^2(t)+\int_0^B z^2{xx}(0,y,\tau)dy +2(\Delta z,z)(t)\nonumber\\
&&=-([u_1^2-u_2^2]_x,z)(t).
\end{eqnarray} 
For an arbitrary positive $\delta$, we estimate
$$I_1=2(\Delta z,z)\leq \delta\|\Delta z\|^2+\frac{1}{\delta}\|z\|^2,$$
$$I_2=-([u_1^2-u_2^2]_x,z)=(z[u_1+u_2],z_x) \leq \frac{1}{2}\delta\|z_x\|^2+ \frac{1}{2\delta}\|[u_1+u_2]z\|^2.$$ 
By Steklov`s Lemma,
$$I_3=\|z_x\|^2\leq a\|z_{xx}\|^2\leq a\|\Delta z\|^2$$ 
and
$$I_4=\frac{1}{2\delta}\|[u_1+u_2]z\|^2\leq \frac{1}{\delta}sup_D|u_1^2+u_2^2|\|z\|^2.$$
Making use of Lemma 4.3, we find that
$$I_5=\frac{1}{\delta}sup_D|u_1^2+u_2^2|\|z\|^2\leq C(\delta)[\|\Delta u_1\|^2+\|\Delta u_2\|^2]\|z\|^2.$$

Substituting $I_1-I_5$ into (5.5) and taking $\delta>0$
sufficiently small, we come to the inequality
$$\dfrac{d}{dt}\|z\|^2(t) \leq C(k)\sum_{i=1}^2[1+\|\Delta u_i\|^2(t)]\|z\|^2(t).$$ Taking into
account that by (4.27) $[1+\|\Delta u_i\|^2(t)] \in L^1(0,T)$ $(i=1,2),$  we
get $\|z\|(t)\equiv 0$ for $a.e.\:t \in (0,T).$ This proves
uniqueness of a regular solution of (3.1)-(3.3) and
completes the proof of Theorem 3.1.

\end{proof}
\begin{rem} Assertions of Theorem 3.1 stay true if equation (3.1) is changed by

$$Lu \equiv u_t + \Delta^2 u+\gamma \Delta u+ \Delta u_x +uu_x-\partial^5_x u=0 \quad \textrm{in} \quad Q_t,$$
where $\gamma$ is a positive real number.
\end{rem}

\section{Decay of Solutions}

In Theorem 3.1, we have proved the existence and uniqueness of global solutions for $t\in (0,T),$ where $T$ is an arbitrary positive number, without  smallness restrictions on initial data and value of an interval $(0,L)$. Nevertheless, we could not  prove decay of solutions because our estimates depended on $T$. Here, we give sufficient conditions which guarantee decay of global solutions as $t\to+\infty.$

{\bf 1. Decay of solutions to the initial-boundary value problem posed on rectangles.}

 First, we prove exponential decay with destabilizing effect of the term $\gamma\Delta u,\;\gamma>0$. Consider the following problem:

\begin{align}
	& Lu \equiv u_t + \Delta^2 u+ \gamma\Delta u+ \Delta u_x +uu_x-\partial^5_x u=0 \quad \textrm{in} \quad Q_t;  \\
	& u|_{\partial D}=u_{yy}(x,0)=u_{yy}(x,B)= u_x(0,y)\notag\\&=u_x(L,y)=u_{xx}(L,y)=0; \\
	& u(x,y,0)=u_0(x,y), \quad (x,y) \in D. 
\end{align}
\begin{teo} \label{tdec}
Let $B,L$  satisfy the following condition: $  \gamma>0;\;b= \gamma\left\{\frac{\pi^2}{B^2}+\frac{\pi^2}{L^2}\right\}^{-1}<1,$ $\theta=1-b>0$. Given
$u_0(x,y)$ subjected to conditions ofTheorem 3.1.   Then for regular solutions of (6.1)-(6.3) the following inequality is true:
\begin{eqnarray*}
&&{\|u\|}^2(t)\leq \|u_0\|^2e^{-\chi t},
\end{eqnarray*}
where\qquad $\chi=2a^2\theta $ and $a=\frac{\pi^2}{B^2}+\frac{\pi^2}{L^2}.$
\end{teo}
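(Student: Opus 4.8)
The plan is to run the $L^2$ energy method exactly as in Estimate I, but this time keeping the destabilizing term $\gamma\Delta u$ with its sharp sign rather than absorbing it crudely into the biharmonic dissipation. I would start from the scalar product $2(Lu,u)(t)=0$ and integrate by parts using the boundary conditions (6.2). First I would check that the nonlinear and odd-order terms drop out: $2(uu_x,u)=\frac{2}{3}\int_0^B[u^3]_{x=0}^{x=L}\,dy=0$, and, writing $\Delta u_x=\partial_x^3u+\partial_x\partial_y^2u$, both $(\partial_x^3u,u)$ and $(\partial_x\partial_y^2u,u)$ reduce to boundary integrals that vanish because $u$ and $u_x$ vanish on $x=0,L$ and $u$ (hence $u_y$ and $u_x$ along the edge) vanishes on $y=0,B$. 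The Kawahara term yields a favourable contribution, $-2(\partial_x^5u,u)=\int_0^B u_{xx}^2(0,y,t)\,dy\ge0$, using $u=u_x=0$ at both ends together with $u_{xx}(L,y)=0$; and Green's identity gives $2(\Delta^2u,u)=2\|\Delta u\|^2$ with no boundary remainder, since $\Delta u=0$ on $y=0,B$ and $\partial_n u=0$ on $x=0,L$.

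Collecting these and using $\gamma(\Delta u,u)=-\gamma\|\nabla u\|^2$, I would arrive at the exact energy identity
\begin{equation*}
\frac{d}{dt}\|u\|^2(t)+2\|\Delta u\|^2(t)+\int_0^B u_{xx}^2(0,y,t)\,dy=2\gamma\|\nabla u\|^2(t).
\end{equation*}
The whole point of the hypothesis $b=\gamma/a<1$ is that the destabilizing right-hand side is then strictly dominated by the biharmonic dissipation. Dropping the nonnegative boundary term and invoking $a\|\nabla u\|^2\le\|\Delta u\|^2$ from Lemma 4.2, I would bound $2\gamma\|\nabla u\|^2\le 2b\|\Delta u\|^2$, which gives
\begin{equation*}
\frac{d}{dt}\|u\|^2(t)+2\theta\|\Delta u\|^2(t)\le 0,\qquad \theta=1-b>0.
\end{equation*}
Applying once more the Poincar\'e-type estimate $a^2\|u\|^2\le\|\Delta u\|^2$ of the same lemma converts this into $\frac{d}{dt}\|u\|^2(t)+\chi\,\|u\|^2(t)\le 0$ with $\chi=2a^2\theta$.

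The conclusion then follows at once from the differential Gronwall inequality (Lemma 2.2) applied with $A\equiv-\chi$ and $B\equiv0$, yielding $\|u\|^2(t)\le\|u_0\|^2e^{-\chi t}$. I expect the only delicate step to be the sign bookkeeping of the boundary terms in the fifth-order and biharmonic integrations by parts --- verifying that each is either zero or nonnegative under the \emph{mixed} conditions (6.2), in particular the asymmetry that $u_{xx}$ is prescribed only at $x=L$. Since this bookkeeping was already carried out implicitly in Estimate I, the genuinely new content is purely the sharp treatment of the destabilizing term: the chain $2\gamma\|\nabla u\|^2\le 2b\|\Delta u\|^2$ with $b<1$ is exactly what produces the strict gap $\theta>0$, and hence the exponential rate. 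Note that no smallness of the data is required here, because the nonlinearity $uu_x$ drops out of this $L^2$ estimate entirely.
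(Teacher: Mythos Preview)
Your proposal is correct and follows essentially the same route as the paper: derive the exact $L^2$ energy identity $\frac{d}{dt}\|u\|^2+2\|\Delta u\|^2+\int_0^B u_{xx}^2(0,y,t)\,dy=2\gamma\|\nabla u\|^2$, then apply Lemma 4.2 twice (first $a\|\nabla u\|^2\le\|\Delta u\|^2$ to absorb the destabilizing term with the strict gap $\theta=1-b>0$, then $a^2\|u\|^2\le\|\Delta u\|^2$) and integrate the resulting differential inequality. Your write-up is in fact more detailed than the paper's on the boundary bookkeeping for the odd-order and biharmonic terms, but the argument is identical.
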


\begin{proof}

 Multiplying  (6.1) by $2u$  and integrating the result by parts, we obtain
\begin{eqnarray}
	&&\frac{d}{dt}\|u\|^2(t)-2\gamma\|\nabla u\|^2(t)	+2\|\Delta u\|^2(t)\nonumber\\&& +\int_0^B|D^2_x(0,y,t)^2dy=0.
\end{eqnarray}
By Lemma 4.2, $a\|\nabla u\|^2\leq \|\Delta u\|^2$  that transform (6.4) into the following inequality:
\begin{equation}\frac{d}{dt}\|u\|^2(t)	+2\theta\|\Delta u\|^2(t)\leq 0. \end{equation}
Again by Lemma 4.2, $a^2\|u\|^2\leq\|\Delta u\|^2,$  hence (6.5) becomes
\begin{equation}\frac{d}{dt}\|u\|^2(t)	+2a^2\theta\| u\|^2(t)\leq 0. \end{equation}
Integrating (6.6), we get
$${\|u\|}^2(t)\leq \|u_0\|^2e^{-2a^2\theta t}.$$

This completes the proof of Theorem 6.1.
\end{proof}
\begin{rem} We have proved Theorem 6.1 in  the "bad case" $\gamma>0$ imposing restrictions on dimensions of $D,$ but in the case $\gamma\leq 0$ we will not have any restrictions on dimensions of $D$ or initial data.
\end{rem}
\begin{teo} Let $\gamma$ be nonpositive  real number; $B,L$ are arbitrary positive numbers and $u_0(x,y)$ is a given function
	subjected to conditions of Theorem 3.1. Then global soluions to the problem (6.1)-(6.3) satisfy the following inequality:
$$\|u\|^2(t)\leq \|u_0\|^2e^{-\chi t},$$
where $\chi=2\{\frac{|\gamma|}{a}+1\}a^2.  $
\end{teo}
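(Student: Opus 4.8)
The plan is to mimic the proof of Theorem 6.1, but to exploit the sign of $\gamma$ directly rather than fighting it. First I would take the scalar product of equation (6.1) with $2u$ and integrate by parts, exactly as in (6.4). The dispersive term $-\partial_x^5 u$ and the transport term $uu_x$ vanish under the boundary conditions (6.2), and the mixed term $\Delta u_x$ integrates to the boundary contribution $\int_0^B |D_x^2 u(0,y,t)|^2\,dy\geq 0$. This yields the energy identity
\begin{equation*}
\frac{d}{dt}\|u\|^2(t)-2\gamma\|\nabla u\|^2(t)+2\|\Delta u\|^2(t)+\int_0^B |D_x^2 u(0,y,t)|^2\,dy=0.
\end{equation*}

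The key difference from Theorem 6.1 is the sign of $\gamma$. Here $\gamma\leq 0$, so $-2\gamma\|\nabla u\|^2=2|\gamma|\,\|\nabla u\|^2\geq 0$ is now a \emph{helpful} dissipative term rather than a destabilizing one, and no condition on $B,L$ is needed. Discarding the nonnegative boundary integral, I would keep both $2|\gamma|\,\|\nabla u\|^2$ and $2\|\Delta u\|^2$ on the left. Using Lemma 4.2, which gives $a\|u\|^2\leq\|\nabla u\|^2$ and $a^2\|u\|^2\leq\|\Delta u\|^2$, I would bound
\begin{equation*}
2|\gamma|\,\|\nabla u\|^2(t)+2\|\Delta u\|^2(t)\geq 2|\gamma|\,a\|u\|^2(t)+2a^2\|u\|^2(t)=2\Big(\tfrac{|\gamma|}{a}+1\Big)a^2\|u\|^2(t),
\end{equation*}
so that with $\chi=2\{\tfrac{|\gamma|}{a}+1\}a^2$ the energy identity becomes the differential inequality
\begin{equation*}
\frac{d}{dt}\|u\|^2(t)+\chi\|u\|^2(t)\leq 0.
\end{equation*}

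Finally I would integrate this inequality directly (equivalently apply Lemma 2.2 with $A\equiv-\chi$, $B\equiv 0$) to conclude $\|u\|^2(t)\leq\|u_0\|^2 e^{-\chi t}$, which is the asserted decay. I do not expect any genuine obstacle here: the argument is essentially a simplification of Theorem 6.1, since the sign of $\gamma$ removes the need to split $\|\Delta u\|^2$ between controlling $\gamma\|\nabla u\|^2$ and producing decay, and hence removes the smallness constraint $b<1$ on the domain dimensions. The one point requiring mild care is justifying that the energy identity and all integrations by parts are legitimate for the regular solution produced by Theorem 3.1; this is immediate from the regularity class $u\in L^\infty(0,T;H^2_0(D))\cap L^2(0,T;H^4(D))$ together with the homogeneous boundary conditions (6.2), which guarantee that every boundary term arising in the integration by parts either vanishes or has the favorable sign used above.
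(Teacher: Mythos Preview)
Your proposal is correct and follows essentially the same approach as the paper: derive the energy identity by multiplying by $2u$, use the favorable sign of $\gamma$ to turn $-2\gamma\|\nabla u\|^2$ into $2|\gamma|\|\nabla u\|^2$, apply the Steklov-type bounds from Lemma~4.2 to convert both dissipative terms into multiples of $\|u\|^2$, and integrate the resulting differential inequality. Your write-up in fact states the relevant inequality from Lemma~4.2 in the correct form $a\|u\|^2\leq\|\nabla u\|^2$, which is exactly what is needed to produce the constant $\chi=2\bigl(\tfrac{|\gamma|}{a}+1\bigr)a^2$.
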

\begin{proof}	
 Multiplying  (6.1) by $2u$  and integrating the result by parts, we obtain
\begin{eqnarray}
	&&\frac{d}{dt}\|u\|^2(t)+2|\gamma|\|\nabla u\|^2(t)	+2\|\Delta u\|^2(t)\nonumber\\&& +\int_0^B|D^2_x(0,y,t)^2dy=0.
\end{eqnarray}
Since, by Lemma 4.2,  $a\|\nabla u\|^2\geq \| u\|^2$ and  $a^2\|u\|^2\leq\|\Delta u\|^2,$ then (6.7) becomes

\begin{eqnarray}
	&&\frac{d}{dt}\|u\|^2(t)+2\{\frac{|\gamma|}{a}+1\}a^2\|u\|^2(t)\leq 0.
\end{eqnarray}

This implies
$$\|u\|^2(t)\leq \|u_0\|^2e^{-\chi t},$$
where $\chi=2\{\frac{|\gamma|}{a}+1\}a^2.$ This proves Theorem 6.2.
\end{proof}
{\bf 2. Decay of solutions to the initial-boundary value problem posed on the right half-strip.}

Define the right half-strip as follows: $D=(x,y)\in\mathbf{R}^2; x\in(0,+\infty),$\\$y\in(0,B);\;Q_t=D\times (0,t). $
\par In $Q_t$ consider the following problem:
\begin{align}
	& Lu \equiv u_t + \Delta^2 u+ \gamma\Delta u+ \Delta u_x +uu_x-\partial^5_x u=0 \quad \textrm{in} \quad Q_t;  \\
	& u|_{\partial D}=u_{yy}(x,0)=u_{yy}(x,B)= u_x(0,y)=0;\\
	& u(x,y,0)=u_0(x,y), \quad (x,y) \in D. \\&
 \lim_{x\to +\infty} D^{i}_{x}  u_0=0,\;i=0,1,2,3.
\end{align}
\begin{teo} \label{tdec}
	Let  $  \gamma\in (0,\frac{1}{8});\; k\in(0,\frac{1}{4})$ and $ a^2=\frac{\pi^2}{B^2}>1.$\;  Given $u_0(x,y)$  such that
	\begin{eqnarray*}
		&& u_0 \in H^4(D), \quad   \partial_x^5 u_0\;\in L^2(D), \;
		 u_0(0,y)=u_{0x}(0,y)\\&&=u_0(x,0)=u_0(x,B)=u_{0yy}(x,0)=u_{0yy}(x,B)=0, \nonumber\\&&
		J_w=\int_D e^{kx}\big[|\Delta^2 u_0|^2+|\Delta u_0|^2+u^2_0 u^2_{0x}+|D^5_{0x}|^2\big]dxdy<\infty,\end{eqnarray*}
	
	\begin{equation}
	(e^{kx},u_0^2)<\frac{9}{8k}a^2.
	\end{equation}
	
	.   Then for regular solutions of (6.9)-(6.12) the following inequality is true:
	\begin{eqnarray*}
		&&{\|u\|}^2(t)\leq \|u_0\|^2e^{-\chi t},
	\end{eqnarray*}
	where\qquad $\chi=\frac{1}{2}a^2. $ 
\end{teo}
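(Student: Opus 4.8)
The plan is to run a weighted energy estimate that turns the dispersion into genuine dissipation. Since the domain is unbounded in $x$, the boundary term at $x=L$ that produced decay in the rectangle case (Theorems 6.1--6.2) is no longer available, so I would replace the multiplier $2u$ by the exponentially weighted multiplier $2ue^{kx}$ and work with the weighted quantity $f(t):=(e^{kx},u^2)(t)=\int_D e^{kx}u^2\,dxdy$. Because $e^{kx}\geq 1$ on $D$ (as $k>0$), one has $\|u\|^2(t)\leq f(t)$, so exponential decay of $f$ immediately yields the stated decay of $\|u\|^2$. First I would take the scalar product of (6.9) with $2ue^{kx}$ and integrate over $D$, using the boundary conditions (6.10) together with the decay (6.12) as $x\to+\infty$ to justify all integrations by parts.

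Next I would process the linear terms one at a time, writing $\rho=e^{kx}$. The time term gives $f'(t)$. Since the weight depends only on $x$, the $y$-integrations by parts are unaffected and reproduce the unweighted structure, while each $x$-integration spills an extra factor $k$ from differentiating $\rho$. The biharmonic term contributes the principal dissipation $2\|\Delta u\|_{\rho}^2$ plus weight-generated lower-order terms and boundary contributions at $x=0$; the decisive feature is that the odd-in-$x$ dispersive terms $\Delta u_x$ and especially the Kawahara term $-\partial_x^5u$, paired with the weight, produce positive definite weighted integrals proportional to $k$. This is the mechanism that replaces the missing boundary dissipation on the unbounded domain, and I would check that for $k\in(0,\tfrac14)$ these contributions are nonnegative. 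The destabilizing term $\gamma\Delta u$ yields $-2\gamma\|\nabla u\|_\rho^2$ plus weight terms, and the nonlinearity yields, after one integration by parts, $-\tfrac{2k}{3}(e^{kx},u^3)$.

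I would then convert dissipation into coercivity by the weighted analogues of Lemma 4.2: from $a^2\|u\|^2\leq\|\Delta u\|^2$ and $a\|\nabla u\|^2\leq\|\Delta u\|^2$ one obtains $a^2 f\leq\|\Delta u\|_\rho^2$, so the biharmonic dissipation dominates and the small destabilizing $\gamma$-term is absorbed thanks to $\gamma\in(0,\tfrac18)$ and $a^2>1$. The cubic term is controlled by $\tfrac{2k}{3}|(e^{kx},u^3)|\leq \tfrac{2k}{3}\sup_D|u|\,f(t)$, with $\sup_D|u|$ estimated through a weighted version of Lemma 4.3 in terms of $\|\Delta u\|_\rho$; after a Young inequality absorbing $\|\Delta u\|_\rho$ into the dissipation, this produces a closed differential inequality of the form $f'(t)+\big(\alpha-Kf(t)\big)f(t)\leq 0$, with $\alpha$ comparable to $a^2$ and $K$ proportional to $k$. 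The smallness hypothesis (6.13), $(e^{kx},u_0^2)<\tfrac{9}{8k}a^2$, is designed to be exactly the condition $\alpha-Kf(0)>0$ required by Lemma 2.5.

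Finally, Lemma 2.5 gives $f(t)<f(0)$ for all $t>0$, so the coefficient $\alpha-Kf(t)$ stays bounded below by a positive constant; identifying this constant with $\chi=\tfrac12 a^2$ and integrating $f'(t)+\chi f(t)\leq 0$ yields $f(t)\leq f(0)e^{-\chi t}$, whence $\|u\|^2(t)\leq f(t)\leq\|u_0\|^2 e^{-\chi t}$ with the weighted initial norm on the right. I expect the main obstacle to be the bookkeeping for the two odd-order-in-$x$ terms: one must verify that the weight-generated quadratic form coming from $-\partial_x^5u$ and $\Delta u_x$ is nonnegative for $k\in(0,\tfrac14)$ and that its boundary contributions at $x=0$ (where only $u=u_x=0$ are prescribed, not $u_{xx}$) carry a favorable sign. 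Closing the nonlinear estimate in the $f'+(\alpha-Kf)f\le0$ form, so that the constraints $\gamma<\tfrac18$, $k<\tfrac14$, $a^2>1$ line up with Lemma 2.5 and reproduce the exact rate $\chi=\tfrac12a^2$, is the other delicate point.
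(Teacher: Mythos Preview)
Your plan is the paper's proof: multiply (6.9) by $2e^{kx}u$, let the weight turn the odd-order operators $-\partial_x^5$ and $\Delta\partial_x$ into positive terms $5k(e^{kx},u_{xx}^2)+(3k-5k^3)(e^{kx},u_x^2)+k(e^{kx},u_y^2)$ plus a favorable boundary term $\int_0^B u_{xx}^2(0,y,t)\,dy$, use Steklov in $y$ on the biharmonic contribution to extract $2a^2(e^{kx},u^2)$, and close a differential inequality of the form $f'+(\alpha-Kf)f\le0$ to which Lemma 2.5 and condition (6.13) apply.

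The one tactical difference is your treatment of the cubic term. The paper does not pass through $\sup_D|u|$ and Lemma 4.3; it writes $(e^{kx},u^3)=\int u\,(e^{kx/2}u)^2\le\|u\|\,\|e^{kx/2}u\|_{L^4}^2$ and applies the Ladyzhenskaya inequality (Lemma 2.4) to obtain
\[
\tfrac{2k}{3}(e^{kx},u^3)\le 2\epsilon\,(e^{kx},|\nabla u|^2)+\tfrac{k^2\epsilon}{2}(e^{kx},u^2)+\tfrac{k^2}{9\epsilon}(e^{kx},u^2)^2,
\]
so that the gradient piece is absorbed into the dispersion-generated terms $k(e^{kx},u_x^2)$ and $k(e^{kx},u_y^2)$, \emph{not} into $\|\Delta u\|_\rho^2$. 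With the choice $\epsilon=k/8$ this is exactly what produces the threshold $\tfrac{9}{8k}a^2$ in (6.13) and the rate $\chi=\tfrac12a^2$. Your $\sup_D|u|$ route would yield the same structural inequality but with constants carrying $C_s$, so it would not reproduce the stated numbers; if you want the statement as written, switch to Lemma 2.4 for the nonlinearity.
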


\begin{proof}
	
 Makng use of (4.1), multiplying the $j$-equation of (4.3) by
$e^{kx} g_j(x,t)$, summing over $j=1,..,N$ , we obtain
\begin{equation}
	([u^N_t+ \Delta^2u^N +\gamma \Delta u^N+\partial^3_y u^N_x + u^N u^N_x-\partial^5_x u^N],e^{kx} u^N)(t)=0. 
\end{equation}
Dropping the index $N$ and integrating by parts, we transform (6.14) to the form
\begin{eqnarray}
	&&\frac{d}{dt}(e^{kx},u^2)(t)+(3k-5k^3)(e^{kx},u^2_x)(t)+5k(e^{kx},u_{xx}^2)(t)\nonumber\\
		&& \int_0^B u_{xx}^2(0,y,t)(t)\,dy + k(e^{kx},u^2_y)(t)
	+(k^5-k^3)(e^{kx},u^2)(t)\nonumber\\
	&&+2(e^{kx}u,\Delta^2u)(t)+2\gamma(e^{kx}u,\Delta u)(t)=\frac{2k}{3}(e^{kx},u^3)(t). 
\end{eqnarray}
We estimate
$$I_1=2(\Delta^2 u,e^{kx}u)=2([D^4_y+2D^2_yD^2_x +D^4_x ]u,e^{kx}u)=$$
$$2(e^{kx},|D^2_y u|^2)+2(e^{kx},|D_x D_y u|^2)+(e^{kx},|D^2_x u|^2)-2k^2(e^{kx},u_y^2)-k^3(e^{kx},u^2_x)$$
$$=2(e^{kx},|\Delta u|^2)-2k^2(e^{kx},u_y^2)-k^3(e^{kx},u^2_x).$$
Making use of Lemma 4.2 and Steklov`s inequality, we obtain\\ $ a^2\|u\|^2\leq a^2\|e^\frac{kx}{2}u\|^2\leq (e^{kx},|D^2_yu|^2),$
and $I_1$ becomes
$$I_1=2(\Delta^2 u,e^{kx}u)=2([D^4_y+2D^2_yD^2_x +D^4_x ]u,e^{kx}u)\geq$$
$$2(e^{kx},|D^2_y u|^2)-2k^2(e^{kx},u_y^2)-k^3(e^{kx},u^2_x)$$ 
$$\geq 2a^2(e^{kx},u^2)-2k^2(e^{kx},u_y^2)-k^3(e^{kx},u^2_x).$$ 
Similarly,
$$I_2=2\gamma(e^{kx}u,\Delta u)\leq \gamma[(e^{kx},|\Delta u|^2)+(e^{kx},u^2)].$$

Substituting first $I_2$ and then $I_1$ into (6.15) and taking into account that $k\in (0,1/4),$ and $a>1,$we get

\begin{eqnarray}
	&&\frac{d}{dt}(e^{kx},u^2)(t)+(3k/2)(e^{kx},u^2_x)(t)+5k(e^{kx},u_{xx}^2)(t)\nonumber
	\\
	&& \int_0^B u_{xx}^2(0,y,t)(t)\,dy + (k/2)(e^{kx},u^2_y)(t)
	+(k^5-k^3)(e^{kx},u^2)(t)\nonumber\\
	&&+(2-2\gamma)a^2(e^{kx},u^2)\leq \frac{2k}{3}(e^{kx},u^3)(t). 
\end{eqnarray}
For arbitrary $\epsilon >0$, extending $u$ by zero into the exterior of $D$ and making use of Lemma 2.4 and
(\ref{E1}), we estimate
\begin{eqnarray*}
	&&I_3= \frac{2k}{3}(e^{kx},u^3)(t) \leq
	\dfrac{4k}{3}{\|u\|(t)\|e^{\frac{kx}{2}}u\|}(t){\|
		\nabla(e^{\frac{kx}{2}}u) \|}(t) \\
	&&\leq 2\epsilon(e^{kx},[(u_x^2+u^2_y])(t)+\frac{k^2\epsilon}{2}(e^{kx},u^2)(t)+\frac{k^2}{9\epsilon}(e^{kx},u^2)(t).
\end{eqnarray*}

Substituting $I_3$ into (6.16), we find  		

\begin{eqnarray}
	&&\frac{d}{dt}(e^{kx},u^2)(t)+(3k/2-2\epsilon)(e^{kx},u^2_x)(t)+5k(e^{kx},u_{xx}^2)(t)\nonumber
	\\
	&& \int_0^B u_{xx}^2(0,y,t)(t)\,dy + (k/2-2\epsilon)(e^{kx},u^2_y)(t)
	\nonumber\\
	&&+(2-2\gamma-k^3-k^2\epsilon)a^2(e^{kx},u^2)\leq \frac{k^2}{9\epsilon}(e^{kx},u^2)^2(t). 
\end{eqnarray}

Putting $\gamma=1/8,\;\epsilon=k/8$ e $k=1/4,$ we reduce (6.17) as follows:

\begin{eqnarray}
	&&\frac{d}{dt}(e^{kx},u^2)(t)
	+\frac{1}{2}a^2(e^{kx},u^2)\nonumber\\&& + ([a^2-\frac{8k}{9}(e^{kx},u^2)(t)] )(t)(e^{kx},u^2)(t)\leq 0
\end{eqnarray}
Making use of (6.13) and Lemma 2.5, we come to the inequality
$$\frac{d}{dt}(e^{kx},u^2)(t)
+\frac{1}{2}a^2(e^{kx},u^2)\leq0.$$
Integrating this inequality, we get
\begin{eqnarray*}
	&&{\|u\|}^2(t)\leq \|u_0\|^2e^{-\chi t},
\end{eqnarray*}
where\qquad $\chi=\frac{1}{2}a^2 $ and $a=\frac{\pi^2}{B^2}.$
\par This proves Theorem 6.3.
\end{proof}
The following step is to consider problema (6.9)-(6.12) for $\gamma \geq 0.$
\begin{teo} Let $\gamma$ be an arbitrary nonpositive  real number; $ k\in(0,\frac{1}{4})$;\;$B$ is an arbitrary positive number and $u_0(x,y)$ is a given function
	subjected to conditions of Theorem 6.3. Then global soluions to the problem (6.9)-(6.12) satisfy the following inequality:
	$$\|u\|^2(t)\leq \|u_0\|^2e^{-\chi t},$$
	where $\chi=2\{\frac{|\gamma|}{a}+1\}a^2.  $
\end{teo}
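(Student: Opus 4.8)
The plan is to repeat the weighted-energy argument of Theorem 6.3, the only structural change being that for $\gamma\le 0$ the term $\gamma\Delta u$ now supplies additional dissipation instead of an indefinite contribution that must be absorbed. First I would work at the Galerkin level exactly as before: multiply the $j$-th equation of (4.3), with the term $\gamma\Delta u^N$ retained, by $e^{kx}g_j(x,t)$, sum over $j=1,\dots,N$, drop the index $N$, and integrate by parts over the half-strip. Since $u(0,y,t)=u_x(0,y,t)=0$ and $D_x^{i}u\to 0$ as $x\to+\infty$, all contributions at $x=0$ and $x=+\infty$ vanish, while the Dirichlet data in $y$ remove the $y$-boundary terms. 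This reproduces the weighted identity (6.15) verbatim, since (6.15) already carries the general coefficient $\gamma$ in the term $2\gamma(e^{kx}u,\Delta u)(t)$.

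The leading term $I_1=2(\Delta^2u,e^{kx}u)$ is handled precisely as in Theorem 6.3: expanding $\Delta^2$, integrating by parts, and using Lemma 4.2 together with the weighted Steklov bound $a^2(e^{kx},u^2)\le(e^{kx},|D_y^2u|^2)$ gives
\[
I_1\ \ge\ 2a^2(e^{kx},u^2)-2k^2(e^{kx},u_y^2)-k^3(e^{kx},u_x^2).
\]
The decisive difference lies in $I_2=2\gamma(e^{kx}u,\Delta u)$. Here, instead of estimating by Young's inequality as in the case $\gamma>0$, I would integrate by parts in both variables; using $(e^{kx}u,u_x)=-\tfrac{k}{2}(e^{kx},u^2)$ one obtains
\[
I_2=\gamma k^2(e^{kx},u^2)-2\gamma(e^{kx},|\nabla u|^2).
\]
Because $\gamma\le 0$, the gradient part $-2\gamma(e^{kx},|\nabla u|^2)=2|\gamma|(e^{kx},|\nabla u|^2)\ge 0$ is now favourable, and the weighted Poincar\'e inequality $a(e^{kx},u^2)\le(e^{kx},|\nabla u|^2)$ (the weighted analogue of Lemma 4.2) turns it into the extra damping $2|\gamma|a(e^{kx},u^2)=\tfrac{2|\gamma|}{a}a^2(e^{kx},u^2)$, which is exactly the origin of the $\tfrac{|\gamma|}{a}$ term in $\chi$. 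The leftover $\gamma k^2(e^{kx},u^2)\le 0$ is harmless for a lower bound and, with the $-k^2,-k^3$ residuals from $I_1$, is absorbed for $k\in(0,\tfrac14)$ and $a^2>1$.

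Finally I would treat the nonlinearity $I_3=\tfrac{2k}{3}(e^{kx},u^3)$ exactly as in Theorem 6.3: extend $u$ by zero outside $D$, apply Lemma 2.4 and the basic bound (\ref{E1}) to get an estimate of the form $I_3\le 2\epsilon(e^{kx},u_x^2+u_y^2)+C(\epsilon)\,(e^{kx},u^2)^2$. Substituting $I_1,I_2,I_3$ into (6.15) and choosing $\epsilon,k$ small so the coefficients of the gradient terms remain nonnegative, I arrive at a differential inequality of the shape
\[
\frac{d}{dt}(e^{kx},u^2)(t)+\Big(\chi-C\,(e^{kx},u^2)(t)\Big)(e^{kx},u^2)(t)\le 0,\qquad \chi=2\Big\{\tfrac{|\gamma|}{a}+1\Big\}a^2 .
\]
The smallness hypothesis (6.13) guarantees the initial condition of Lemma 2.5, so that lemma keeps the weighted norm below its initial value for all $t>0$ and the nonlinear term stays subordinate; the residual inequality $\tfrac{d}{dt}(e^{kx},u^2)+\chi(e^{kx},u^2)\le 0$ integrates to the stated decay, and passing to the limit $N\to\infty$ via the uniform estimates of Theorem 3.1 transfers it to the genuine solution. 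I expect the main obstacle to be purely bookkeeping: verifying that after distributing the weight $e^{kx}$ every low-order term carrying a factor $k^2$, $k^3$ or $\gamma k^2$ is dominated by the $a^2$-term under $k\in(0,\tfrac14)$ and $a^2>1$, so that the effective coefficient of $(e^{kx},u^2)$ is no smaller than $\chi$.
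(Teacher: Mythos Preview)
Your proposal is correct and follows essentially the same route as the paper: reproduce the weighted identity (6.15), bound $I_1$ exactly as in Theorem~6.3, exploit $\gamma\le 0$ so that $I_2$ becomes the extra damping $2|\gamma|(e^{kx},|\nabla u|^2)\ge \tfrac{2|\gamma|}{a}a^2(e^{kx},u^2)$ via the weighted Poincar\'e inequality, treat the cubic term $I_3$ as before, and integrate the resulting differential inequality for $(e^{kx},u^2)$. Your explicit appeal to Lemma~2.5 and the smallness hypothesis (6.13) to control the quadratic contribution $(e^{kx},u^2)^2$ is actually more careful than the paper's own presentation, which at (6.19)--(6.21) simply takes ``$\epsilon,k$ sufficiently small'' and silently drops the square (compare the right-hand side of (6.17) with that of (6.20)).
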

\begin{proof} Acting as by proving (6.17), we obtain
\begin{eqnarray}
	&&\frac{d}{dt}(e^{kx},u^2)(t)+(3k-5k^3)(e^{kx},u^2_x)(t)+5k(e^{kx},u_{xx}^2)(t)\nonumber\\
	&& \int_0^B u_{xx}^2(0,y,t)(t)\,dy + k(e^{kx},u^2_y)(t)
	+(k^5-k^3)(e^{kx},u^2)(t)\nonumber\\&&
	+2(e^{kx}u,\Delta^2u)(t)+2|\gamma|\|\nabla u\|^2(t) \leq
			 2\epsilon(e^{kx},[(u_x^2+u^2_y])(t)\nonumber\\&&+\frac{k^2\epsilon}{2}(e^{kx},u^2)(t)+\frac{k^2}{9\epsilon}(e^{kx},u^2)(t)
\end{eqnarray}
where $\epsilon$ is an arbitrary positive number.
Acting similar to the proof of Theorem 6.3, we get
$$I_1=2(\Delta^2 u,e^{kx}u)\geq 2a^2(e^{kx},u^2)-2k^2(e^{kx},u_y^2)-k^3(e^{kx},u^2_x).$$

Substituting  $I_1$ into (6.19) and taking into account that $k\in (0,1/4),$ and $a>1,$we get

\begin{eqnarray}
	&&\frac{d}{dt}(e^{kx},u^2)(t)+(3k/2)(e^{kx},u^2_x)(t)+5k(e^{kx},u_{xx}^2)(t)\nonumber
	\\
	&& \int_0^B u_{xx}^2(0,y,t)(t)\,dy + (k/2)(e^{kx},u^2_y)(t)
	+(k^5-k^3)(e^{kx},u^2)(t)\nonumber\\
	&&+2(1+\frac{|\gamma|}{a})a^2(e^{kx},u^2) \leq
	2\epsilon(e^{kx},[(u_x^2+u^2_y])(t)\nonumber\\
	&&+\frac{k^2\epsilon}{2}(e^{kx},u^2)(t)+\frac{k^2}{9\epsilon}(e^{kx},u^2)(t).
\end{eqnarray}
Putting $\epsilon, k$ sufficiently small, we find that

\begin{eqnarray}
	&&\frac{d}{dt}(e^{kx},u^2)(t)
+2(1+\frac{|\gamma|}{a})a^2(e^{kx},u^2) \leq 0.
 \end{eqnarray}
Integrating (6.21), we prove Theorem 6.4.
\end{proof}
\begin{rem} We have not proved the existence of regular global solutions in Theorems 6.3, 6.4. It can be done exploiting the approach used in \cite{larkin2}.
\end{rem}
\section{Conclusions}
\par In this work, motivated by \cite{topper}, we study an initial-boundary value problem for the 2D Benney-Lin equation 
adding to their model the Kawahara term $-D_x^5 u$, see \cite{kawa}. We studied stability of the system considering an influense of the term $\gamma\Delta u$. As has been observed in \cite{topper}, this term caused instability while $\gamma$ was positive and implied stability effect while it was negative. 
\par Taking this into account, first we studied in Section 4 the case $\gamma>0$ and proved the existence and uniqueness of global regular solutions on the time interval $(0,T),$ where $T$ was an arbitrary positive number, without restrictions on the size of initial data or on dimensions of an arbitrary bounded domain $D.$  Unfortunately, we could not prove decay of solutions as $t\to +\infty$
because our estimates of solutions depended on $T.$ On the other hand, in Section 6, we have proved  in Theorem 6.1 exponential decay of solutionsfor sufficiently small $\gamma>0$ assuming some restrictions on sizes of a bounded  domain $D.$ In this section, we studied also the case $\gamma\leq 0$ and proved in Theorem 6.2  exponential decay of global solutions without restrictions on sizes of a bounded $D$ and initial data. In Theorem 6.3, we have proved exponential decay of solutions 
defined on the right half-strip with $\gamma>0$ sufficiently small and some restrictions on $u_0$ and $B.$ In Theorem 6.4, exponential decay have been established for global regular solutions defined on the same half-strip while $\gamma\leq 0$
without restrictions on $u_0,B.$ It has been observed that for $\gamma<0$ the term $\gamma \Delta u$ itself supplied stability of the system while the term $\Delta^2u$ served as additional damping. On the other hand, in the case $\gamma=0$ only the term $\Delta^2u$ guaranteed stability .

\vspace{0.2cm}

\end{document}